\definecolor{backgrey}{rgb}{0.86,0.86,0.86}
\definecolor{dblue}{rgb}{0,0.0,0.5}
\definecolor{dred}{rgb}{0.4,0.2,0}
\definecolor{dgreen}{rgb}{0.0,0.5,0}
\newcommand{\captionfonts}{\small}
\long\def\@makecaption#1#2{%
  \vskip\abovecaptionskip
  \sbox\@tempboxa{{\captionfonts #1: #2}}%
  \ifdim \wd\@tempboxa >\hsize
    {\captionfonts #1: #2\par}
  \else
    \hbox to\hsize{\hfil\box\@tempboxa\hfil}%
  \fi
  \vskip\belowcaptionskip}
\newtheorem{theorem}{Theorem}
\newtheorem{assumption}[theorem]{Assumption}
\newtheorem{remark}[theorem]{Remark}
\newtheorem{definition}[theorem]{Definition}
\newtheorem{lemma}[theorem]{Lemma}
\newenvironment{proof}[1][Proof]{\textbf{#1.} }{\ \hspace*{\fill} \rule{0.5em}{0.5em}}
\title{Stabilization of LTV systems over uncertain channels}
\author{\quad Amit Diwadkar \quad Umesh Vaidya 
\thanks{A. Diwadkar is a post-doctoral researcher with the Department of Electrical and
Computer Engineering, Iowa State University, Ames, IA, 50011
diwadkar@iastate.edu}
\thanks{U. Vaidya is with the Department of Electrical and
Computer Engineering,
Iowa State University,
Ames, IA, 50011 ugvaidya@iastate.edu}%
}
\begin{document}
\maketitle \thispagestyle{empty} \pagestyle{empty}

\begin{abstract}
In this paper, we study the problem of control of discrete-time linear time varying systems over uncertain channels. The uncertainty in the channels is modeled as a stochastic random variable. We use exponential mean square stability of the closed-loop system as a stability criterion. We show that fundamental limitations arise for the mean square exponential stabilization for the closed-loop system expressed in terms of statistics of channel uncertainty and the positive Lyapunov exponent of the open-loop uncontrolled system. Our results generalize the existing results known in the case of linear time invariant systems, where Lyapunov exponents are shown to emerge as the generalization of eigenvalues from linear time invariant systems to linear time varying systems. Simulation results are presented to verify the main results of this paper.
\end{abstract}

\begin{IEEEkeywords}
LTV systems, uncertainty, fundamental limitation, fading channel, optimal control
\end{IEEEkeywords}

\section{Introduction}
\label{intro}
There has been increased research activity in the area of network controlled systems \cite{antsaklis}. One of the important problems addressed in the area of network controlled systems is that of characterizing the performance limitations on control and estimation caused by unreliable communication channels. In this paper, we continue this line of research to prove limitations results for the stabilization of linear time varying (LTV) systems with uncertain communication channels between the plant and the controller.

There is a long list of literature on control of a system over unreliable communication channels. The problem has been looked at in the context of uncertainty threshold principle in \cite{athans_ku, koning}. The notion of anytime capacity was introduced to study the limitations introduced in control of a system over unreliable communication links \cite{sahai_anytime}. Information theoretic results on communication constraints, due to packet loss, have been addressed in \cite{communication_channel, Elia_mitter, tatikonda_mitter}. In \cite{sino_TCP, Gupta2007439}, the problem of optimal control of linear time invariant (LTI) systems over packet-drop links is studied. The combined problem of estimation and control over unreliable links using two different protocols, User Datagram Protocol(UDP) and Transmission Control Protocol(TCP), is studied with LTI plant dynamics in \cite{Imer20061429, sastry_sino_schven_poolla}. Robust control framework is used in the analysis and synthesis of controllers for Multi Input Multi Output (LTI) systems over unreliable channels in \cite{Elia2005237}. In \cite{Seiler_sengupta} the authors consider packet loss between sensor and controller, and pose the control design problem as an $\rm{H}_{\infty}$ optimization problem. Similarly, Markov jump linear systems results are also used in \cite{JLS_aut_cont, costa_LQR_MJLS} to study the network problem over packet-drop links. The problem of characterizing limitations for stabilization and observation of nonlinear systems over erasure channels is also studied and appeared in \cite{vaidya_amit_journal,vaidya_elia_erasure, Vaidya_scl}

In this paper, we study the problem of feedback control of an LTV plant in the presence of uncertainty in communication link connecting the plant and controller. The uncertainty in the communication channels is modeled as a stochastic random variable $\gamma$ with mean $\mu$ and variance $\sigma^2$ (refer to Figure \ref{fig:sch}). The main results of this paper prove that fundamental limitations arise for mean square stabilization of LTV systems over uncertain channels. The limitations are expressed in terms of variance of the uncertainy $\sigma^2$, the mean of the uncertainty $\mu$, and the instability of the open-loop plant dynamics. The instability of the open-loop LTV plant dynamics is captured using positive {\it Lyapunov exponents}. Roughly speaking, Lyapunov exponents can be thought of as the generalization of eigenvalues from LTI systems to LTV systems, and are used to characterize exponential stability/instability of LTV systems. With this analogy, it is interesting to compare the results obtained in this paper with the existing results on the control of LTI systems over uncertain channels \cite{sastry_sino_schven_poolla, Imer20061429, Elia2005237, networksystems_martin_gupta}. The conditions obtained in \cite{sastry_sino_schven_poolla, Imer20061429, Elia2005237, networksystems_martin_gupta} are derived for the stability criterion of bounded mean square stability. We employ a stronger notion of stability given by exponential mean square stability, that characterizes exponential decay in the mean square sense. In particular, under appropriate assumptions, we prove that for a system with $N$ inputs (where $N$ is the dimension of the state space), limitation is a function of only the maximum Lyapunov exponent; whereas, in the single input case, all positive Lyapunov exponents play a role to determine the limitations for control. We provide some discussion on the differences between the results obtained for the $N$ inputs and single input case in Section \ref{ctrlr}, Remark \ref{remark_NSingle}. The results obtained in this paper are consistent with the existing results for LTI systems, where Lyapunov exponents emerge as a natural generalization of eigenvalues from LTI to LTV systems. There are two main contributions of the paper. First, we developed a framework employing tools from ergodic theory of dynamical systems and control theory to study the problem in network controlled systems with LTV dynamics. Second, we provided computable analytical conditions for the stability of feedback controlled LTV systems with uncertainty in the actuation channels. Connection between system entropy and Lyapunov exponents \cite{minimum-entropy, vaidya_amit_journal} can also be used to provide alternate information-theoretic interpretation of our limitation results.

The organization of this paper is as follows. In section \ref{prelim}, we provide some preliminaries and state the main assumptions on the system dynamics. The main results of this paper are proven in section \ref{ctrlr}. Simulation results are presented in section \ref{sim}, followed by conclusions in section \ref{inf}.
\section{Preliminaries}
\label{prelim}
We consider the problem of control of multi-state multi-input LTV systems with a stochastic memoryless multiplicative uncertainty between the plant and the controller (refer to Fig. \ref{fig:sch}a). The LTV system with multiplicative uncertainty channel is described by the following equation:
\begin{align}
\label{ctrl_eqn}
x(t+1) &= A(t)x(t) + \gamma(t)B(t)u(t),
\end{align}
where $x(t) \in \mathbb{R}^N$ is the state, $u(t) \in \mathbb{R}^M$ is input with $M\leq N$, and $t\geq 0$. The channel uncertainty, between the plant and the controller, is modeled using the random variable $\gamma(t)$ and is assumed to satisfy following statistics, $E[\gamma(t)]=\mu$ and $E[(\gamma(t)-\mu)^2]=\sigma^2$. By defining a new random variable, $\Delta(t):=\gamma(t)-\mu$, the feedback control system in Fig. \ref{fig:sch}a can be redrawn as shown in Fig. \ref{fig:sch}b. The random variable $\Delta(t)$ now satisfies
\begin{align}
\label{delta_uncertainty}
E\left[\Delta\right] = 0, \quad \quad   E\left[\Delta^2\right] = \sigma^2,
\end{align}
The feedback control system inside the dotted line in Fig. \ref{fig:sch}b now represents a nominal system with mean connectivity $\mu$, interacting with zero mean $\Delta$ uncertainty with variance $\sigma^2$. The system Eq. (\ref{ctrl_eqn}) can be written as:
\begin{figure}[h!]
\centering
\subfloat[]{\label{fig:sch1}\includegraphics[width=0.4\textwidth]{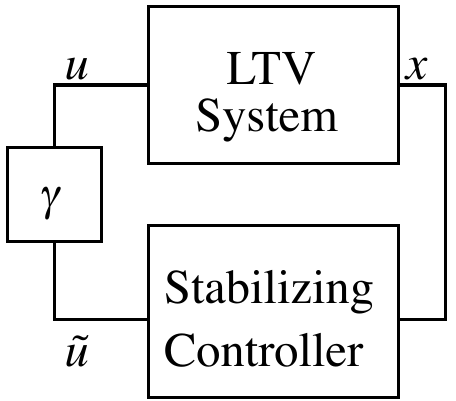}}
\hspace{0.4in}
\subfloat[]{\label{fig:sch2}\includegraphics[width=0.43\textwidth]{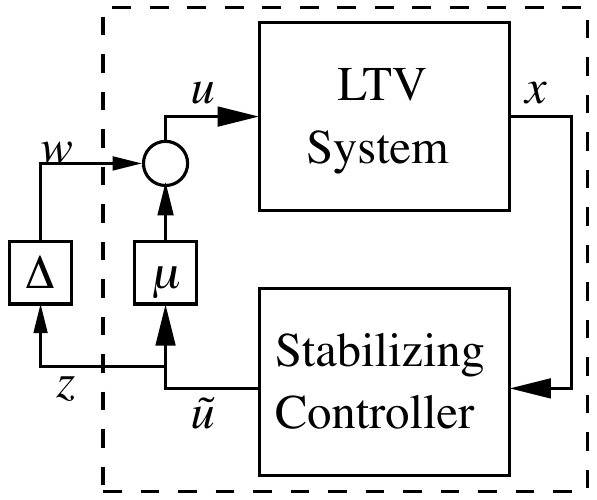}}
\caption{(a) A schematic of LTV system with multiplicative uncertainty in actuation, (b) A schematic of nominal LTV system interconnected with zero mean random variable.}
\label{fig:sch}
\end{figure}
\begin{align}
\label{ctrl_eqn1}
x(t+1) &= A(t)x(t) + \mu B(t)u(t) + \Delta(t)B(t)u(t).
\end{align}
Writing the feedback control system with multiplicative channel uncertainty, $\gamma$, as the interconnection of nominal system with mean connectivity, $\mu$, and zero mean random variable, $\Delta$, closely follows \cite{Elia2005237}.
\begin{remark} The block diagram in Fig. \ref{fig:sch}b, where a nominal system (inside the dotted box) is interconnected with zero mean random variable $\Delta$ allows us to interpret the main results of this paper along the lines of the results known in the robust control literature for LTI systems \cite{paganini_dull, Elia2005237} (refer to Remark \ref{remark_robust}).
\end{remark}
We make the following assumptions about the system dynamics.
\begin{assumption}\label{unif_cont} We assume the system matrix, $A(t)$, is uniformly bounded above and below, and that $B'(t)B(t)$ is uniformly bounded from below. Furthermore, we assume the pair, $(A(t), B(t))$, is uniformly controllable. The definition of uniform controllability is from \cite{kwakernaak} and is given as follows.
\end{assumption}
\begin{definition}[Uniformly controllable]\label{unif_contrb}
The sequence of pairs, $(A(t),B(t))$, is said to be uniformly controllable, if there exists an integer $k \geq 1$ and positive constants $\alpha_0$, $\alpha_1$, $\beta_0$ and $\beta_1$, such that
\begin{align}
&W(t_0,t_0+k) > 0\\
&\alpha_0I \leq W^{-1}(t_0,t_0+k) \leq \alpha_1I \\
&\beta_0 I \leq \Phi'(t_0+k,t_0)W^{-1}(t_0,t_0+k)\Phi(t_0+k,t_0) \leq \beta_1 I,
\end{align}
$\forall t_0$, where $W(t_0,t_1)$ is a symmetric nonnegative matrix.
\begin{align}
W(t_0,t_1) = \sum_{t=t_0}^{t_1-1}\Phi(t_1,t+1)B(t)B'(t)\Phi'(t_1,t+1)
\end{align}
and $\Phi(t,t_0) = \prod_{k=t_0}^{t}A(k)$ is the transition matrix.
\end{definition}
We now provide the following definition of exponential stable and exponentially antistable dynamics for the LTV system. The following two definitions closely follow \cite{minimum-entropy}.
\begin{definition}[Exponential stable and antistable \cite{minimum-entropy}]\label{exp_stab_unstab}
Consider the uncontrolled system in (\ref{ctrl_eqn1}) given by $x(t+1) = A(t)x(t)$. Let $k$, $l$ be positive integers. We say that $\{A(t)\}_{t\geq 0}$ is
\begin{enumerate}
\item {\it Uniformly exponentially stable}, if there exist positive constants $K_s$ and $\beta_s < 1$, such that $\Big\|\prod_{t=k}^{k+l-1}A(t)\Big\| < K_s\beta_s^l$.
\item {\it Uniformly exponentially antistable}, if there exist positive constants $K_u$ and $\beta_u > 1$, such that $\underline{\mu}\left(\prod_{t=k}^{k+l-1}A(t)\right) > K_u\beta_u^l$, where for any $N\times N$ matrix $M$, $\underline{\mu}(M) := inf\{\|Mx\|\colon \|x\| =1\}$.
\end{enumerate}
\end{definition}

For limitation results involving LTI systems, it is known that the fundamental limitations for stabilization using state feedback controller arise only due to antistable parts of the system \cite{Elia2005237, networksystems_foundation_sastry}. A first step towards proving such results for the LTI system is to perform a change of coordinates that allows one to decompose the system matrix into stable and antistable components. We expect similar conclusions to hold true for the limitations results involving LTV systems. In fact, using results from \cite{exponential_dich, minimum-entropy}, it can be shown that the LTV system admits decomposition into stable and antistable components under the assumption that system matrices $\{A(t)\}$ satisfy exponential dichotomy property, defined as follows.
\begin{definition}[Exponential Dichotomy \cite{minimum-entropy}]\label{exp_dichotomy}
Let $\{A(t)\}$ be a sequence of $N\times N$ matrices and let $P(t)$ be a bounded sequence of projections in $\mathbb{R}^N$, such that the rank of $P(t)$ is constant. The sequence $\{P(t)\}$ is a dichotomy for $\{A(t)\}$ if the commutativity condition, $A(t)P(t) = P(t+1)A(t)$, is satisfied for all $t$, and there exists positive constants $L_d$ and $\beta_d > 1$, such that
\begin{align*}
\Bigg\|\left(\prod_{t=k}^{k+l-1}A(t)\right)P(k)x\Bigg\| &> L_d\beta_d^l\Big\|P(k)x\Big\| \\
\Bigg\|\left(\prod_{t=k}^{k+l-1}A(t)\right)\left(I-P(k)\right)x\Bigg\| &< \frac{1}{L_d\beta_d^l}\Big\|\left(I-P(k)\right)x\Big\|
\end{align*}
for any $x \in \mathbb{R}^N$.
\end{definition}

Under the assumption of exponential dichotomy (Definition \ref{exp_dichotomy}), it can be proven (\cite {minimum-entropy, exponential_dich}) there exists a bounded sequence of matrices $\{T(t)\}$ with bounded inverses, such that the system matrices pair, $(A(t), B(t))$, can be transformed into stable and antistable components, i.e.,
\begin{align}
\left[
\begin{array}{c|c}
T(t+1)A(t)T(t)^{-1} & T(t+1)B(t)
\end{array}
\right] = \left[
\begin{array}{cc|c}
A_u(t) & 0 & B_u(t)\\
0 & A_s(t) & B_s(t)
\end{array}
\right],
\end{align}
where $A_s(t)$ is exponentially stable and $A_u(t)$ is exponentially antistable (Definition \ref{exp_stab_unstab}). We now make the following assumption on system dynamics.
\begin{assumption}[Stable and antistable] \label{assumption_split} We assume the system matrices, $\{A(t)\}$, possesses an exponential dichotomy. Hence, there exists a change of coordinates, $\{T(t)\}$, such that the system may be transformed into a block diagonal form with stable and antistable components. Henceforth, with no loss of generality, we assume that system pair, $(A(t),B(t))$, is already decomposed into exponentially stable and exponentially antistable components i.e.,
\begin{align}
\left[
\begin{array}{c|c}
A(t) & B(t)
\end{array}
\right] = \left[
\begin{array}{cc|c}
A_u(t) & 0 & B_u(t)\\
0 & A_s(t) & B_s(t)
\end{array}
\right].
\end{align}
\end{assumption}

Our objective is to design a linear state feedback controller, $u(t)=K(t)x(t)$, so that the feedback control system (\ref{stoc_ctrl_eqn}) is mean square exponentially stable (Definition \ref{EMSS}).
\begin{align}
\label{stoc_ctrl_eqn}
x(t+1) &= \left(A(t) + \mu B(t)K(t) + \Delta(t)B(t)K(t)\right)x(t):= \mathcal{A}(t,\Delta(t))x(t).
\end{align}
\begin{definition}[Mean Square Exponential Stability]\label{EMSS}
The system (\ref{stoc_ctrl_eqn}) is said to be mean square exponentially stable, if there exists positive constants $K < \infty$ and $\beta<1$, such that
\begin{align*}
E_{\Delta_0^t}\left[\parallel x(t+1)\parallel^2\right] \leq K\beta^t \parallel x(0)\parallel^2,
\end{align*}
for all $x(0) \in \mathbb{R}^N$, where $E_{\Delta_0^t}[\cdot]$ is the expectation over the sequence $\{\Delta(k)\}_{k=0}^t$.
\end{definition}

It is well known, that the stability information for an LTV system cannot be obtained from the eigenvalues of the time varying matrix computed at each fixed time, $t$ \cite{Khalil_book}. However, stability information for the LTV system can be obtained using {\it Lyapunov exponents}. The Multiplicative Ergodic Theorem (MET) provides technical conditions for the existence of Lyapunov exponents (\cite{Ruelle_ergodic} Proposition 1.3). Before we proceed with the definition of Lyapunov exponents, we provide a definition for the exterior powers of the matrices \cite{arnold_book_rds} (Chap. 3, Lemma 3.2.6).
\begin{definition}
Let '$\wedge$' denote the usual outer product between two quantities and $V$ be a real $n$-dimensional vector space. Let $\wedge^q V$ denote alternating $k$-linear forms. Suppose $M\colon V\to V$ is a linear operator. Then, for $u_1,\ldots,u_q \in V$, the linear extension of
\begin{align*}
M^{\wedge q}(u_1 \wedge \cdots \wedge u_q) = Mu_1 \wedge \cdots \wedge Mu_q
\end{align*}
defines a linear operator, $M^{\wedge q}\colon \wedge^q V \to \wedge^q V$.
\end{definition}

\begin{definition}[Lyapunov exponents \cite{Ruelle_ergodic} Proposition 1.3]\label{Lyapunov_exponents}
Let $\{L(t)\}_{t>0}$ be a sequence of real $m\times m$, matrices such that
\begin{equation*}
\lim_{t\to \infty}\sup\frac{1}{t}\log\parallel L(t)\parallel \leq 0.
\end{equation*}

Define $\mathcal{L}(t) := L(t) L(t-1) \dots L(1)$. Furthermore, suppose the following limits exist
\begin{align}
\lim_{t\to \infty}\frac{1}{t}\log\parallel \mathcal{L}(t)^{\wedge q} \parallel.
\end{align}

Then, the limit
\begin{align}
\Lambda = \lim_{t\to \infty}(\mathcal{L}(t)'\mathcal{L}(t))^{\frac{1}{2t}}
\end{align}
exists. Let $\lambda^i_{exp}$ for $i=1,\ldots,N$ be the eigenvalues of $\Lambda$, such that $\lambda^1_{exp} \geq \lambda^2_{exp} \geq \cdots \geq \lambda^N_{exp}$. Then, the Lyapunov exponents, $\Lambda_{exp}^i$ for $i=1,\ldots, m$, for the system $x(t+1)=L(t)x(t)$ are defined as $\Lambda^i_{exp}=\log \lambda^i_{exp}$. Furthermore, if $\det\left(\Lambda\right)\neq 0$, then
\begin{align}
\label{det_sum_exp}
\lim_{t\to \infty}\frac{1}{t}\log\left|\det\left(\mathcal{L}(t)\right)\right| =\log \prod_{k=1}^N \lambda^k_{exp}(x).
\end{align}
\end{definition}

\begin{remark} The Lyapunov exponents can be used for the stability analysis of the LTV system. In particular, if the maximum Lyapunov exponent of the system $x(t+1)=L(t)x(t)$ is negative, i.e., $\Lambda_{exp}^1<0$, then the system is exponentially stable \cite{arnold_book_rds}.
\end{remark}

\begin{assumption} We assume the Lyapunov exponents for the uncontrolled system $x(t+1)=A(t)x(t)$ are well defined, and there are $0<N_1\leq N$ positive Lyapunov exponents, and $N_2:=N-N_1$ negative Lyapunov exponents.
\end{assumption}

\section{Main Results}
\label{ctrlr}
In this section, we prove the main results of this paper for the limitation on control over uncertain channels in actuation. We use mean square exponential stability of the closed-loop systems as the stability metric. Our first theorem provides a Lyapunov function-based necessary condition for the mean square exponential stability of uncertain feedback control system (\ref{stoc_ctrl_eqn}).

\begin{theorem}
\label{lyap_thm}
The feedback control system (\ref{stoc_ctrl_eqn}) is mean square exponentially stable only if there exists a sequence of positive definite matrices $\{P(t)\}_{t\geq 0}$ and positive constants $\alpha_1$ and $\alpha_2$, such that the following conditions are satisfied.
\begin{align}
\label{MSS_lyap_fn_condn}
E_{\Delta(t)}\left[\mathcal{A}'(t,\Delta(t))P(t+1)\mathcal{A}(t,\Delta(t)) \right] < P(t),\;\;\; \alpha_1 I < P(t) < \alpha_2 I,
\end{align}
for all $t\geq 0$, and ${\cal A}(t,\Delta(t)):=A(t)+(\mu +\Delta)B(t)K(t)$ from Eq. (\ref{stoc_ctrl_eqn}).
\end{theorem}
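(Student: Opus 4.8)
\emph{Proof strategy.} The statement is a converse Lyapunov result, so the plan is to build the certificate $\{P(t)\}$ directly from the closed‑loop dynamics. For $k\geq t$ let $\Psi(k,t)$ denote the (random) closed‑loop transition matrix, i.e. $\Psi(t,t)=I$ and $\Psi(k,t)=\mathcal{A}(k-1,\Delta(k-1))\cdots\mathcal{A}(t,\Delta(t))$, so that the solution of (\ref{stoc_ctrl_eqn}) with $x(t)=v$ is $x(k)=\Psi(k,t)v$. I would then set
\begin{align}
\label{P_construct}
P(t):=\sum_{j=0}^{\infty} E_{\Delta_t^{\,t+j-1}}\left[\Psi'(t+j,t)\,\Psi(t+j,t)\right],
\end{align}
which is the natural time‑varying, stochastic analogue of the observability‑type Gramian $\sum_{j\geq 0}(\mathcal{A}^j)'\mathcal{A}^j$ used in the converse Lyapunov theorem for LTI systems.

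First I would check that (\ref{P_construct}) is well defined, positive definite, and uniformly bounded. Each summand is positive semidefinite, so the partial sums are monotone and it suffices to bound them. For a unit vector $v$, $v' E_{\Delta_t^{\,t+j-1}}[\Psi'(t+j,t)\Psi(t+j,t)] v = E_{\Delta_t^{\,t+j-1}}[\|x(t+j)\|^2]$ for the trajectory of (\ref{stoc_ctrl_eqn}) started from $x(t)=v$; invoking mean square exponential stability from initial time $t$ bounds this by $K\beta^{\,j}$, so $\|P(t)\|\leq K\sum_{j\geq 0}\beta^{\,j}=K/(1-\beta)$, the series converges, and $P(t)<\alpha_2 I$ with $\alpha_2:=1+K/(1-\beta)$, uniformly in $t$. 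Retaining only the $j=0$ term, which equals $I$, gives $P(t)\geq I$, hence $\alpha_1 I<P(t)$ with, say, $\alpha_1:=1/2$, and in particular $P(t)$ is positive definite.

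Next I would verify the Lyapunov inequality (\ref{MSS_lyap_fn_condn}). Using that $\{\Delta(k)\}$ are independent across time, the cocycle identity $\Psi(t+1+j,\,t+1)\,\mathcal{A}(t,\Delta(t))=\Psi(t+1+j,\,t)$, and Fubini to merge the expectation over $\Delta(t)$ with that over $\Delta(t+1),\dots,\Delta(t+j)$,
\begin{align}
E_{\Delta(t)}\left[\mathcal{A}'(t,\Delta(t))\,P(t+1)\,\mathcal{A}(t,\Delta(t))\right]
  &=\sum_{j=0}^{\infty}E_{\Delta_t^{\,t+j}}\left[\Psi'(t+1+j,t)\,\Psi(t+1+j,t)\right] \nonumber \\
  &=\sum_{i=1}^{\infty}E_{\Delta_t^{\,t+i-1}}\left[\Psi'(t+i,t)\,\Psi(t+i,t)\right]
   =P(t)-I\;<\;P(t),
\end{align}
so (\ref{MSS_lyap_fn_condn}) holds with the explicit margin $I$; interchanging $E_{\Delta(t)}$ with the infinite sum is legitimate by monotone convergence for positive semidefinite matrices, or directly from the uniform bound of the previous step.

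The one delicate point is that in the second paragraph the exponential decay estimate is applied from an arbitrary initial time $t$, not only from $t=0$. So the step I regard as the main obstacle is to argue that Definition \ref{EMSS}, together with the standing uniform boundedness of $\{A(t)\}$, $\{B(t)\}$ (Assumption \ref{unif_cont}) and of the feedback gain $\{K(t)\}$, yields a mean square exponential decay bound with constants $K$ and $\beta$ independent of the initial time; equivalently, that uniform mean square exponential stability is what is in force. Once that uniform estimate is available, the construction (\ref{P_construct}) together with the two displays above completes the proof, paralleling exactly the converse Lyapunov argument for LTI systems and furnishing the Lyapunov‑inequality starting point for the quantitative limitation results in the remainder of Section \ref{ctrlr}.
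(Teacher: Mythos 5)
Your proposal is correct and follows essentially the same route as the paper: the converse-Lyapunov construction $P(t)=\sum_{n\geq t}E_{\Delta_t^n}\bigl[(\prod_{k=t}^n\mathcal{A}(k,\Delta(k)))'(\prod_{k=t}^n\mathcal{A}(k,\Delta(k)))\bigr]$, bounded above by summing the mean-square decay estimate and shown to satisfy the Lyapunov inequality by peeling off one factor of $\mathcal{A}(t,\Delta(t))$. Your inclusion of the $j=0$ identity term is a mild refinement — it makes the lower bound $P(t)\geq I$ and the strict margin $I$ in the inequality immediate, whereas the paper must separately argue that $E_{\Delta(t)}[\mathcal{A}'(t,\Delta(t))\mathcal{A}(t,\Delta(t))]$ is positive definite and uniformly bounded below — and the initial-time-uniformity issue you flag (Definition \ref{EMSS} is stated only from $t=0$, yet the decay bound is applied from arbitrary start time $t$) is a genuine point, but it is equally present and silently assumed in the paper's own proof.
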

\begin{proof}
Consider the following construction of $P(t)$,
\begin{align*}
P(t) = \sum_{n=t}^{\infty}E_{\Delta_t^n}\left[ \left(\prod_{k=t}^n\mathcal{A}(k,\Delta(k))\right)' \left(\prod_{k=t}^n\mathcal{A}(k,\Delta(k))\right) \right],
\end{align*}
where $E_{\Delta_1^n}[\cdot]$ means expectation has been taken over $\Delta(t)$ for $t = 1,\ldots,n$. Since the closed-loop system, $x(t+1)={\cal A}(t,\Delta(t))$, is assumed mean square exponentially stable, the construction for $P(t)$ is well defined.
We can also write the above equation as
\begin{align*}
E_{\Delta(t)}\left[\mathcal{A}'(t,\Delta(t))\mathcal{A}(t,\Delta(t)) + \mathcal{A}'(t,\Delta(t))P(t+1)\mathcal{A}(t,\Delta(t))\right] = P(t).
\end{align*}
The equation for $P(t)$ can be rewritten as follows:
\begin{align*}
E_{\Delta(t)}\left[\mathcal{A}'(t,\Delta(t))P(t+1)\mathcal{A}(t,\Delta(t))\right] - P(t) = -E_{\Delta(t)}\left[\mathcal{A}'(t,\Delta(t))\mathcal{A}(t,\Delta(t))\right].
\end{align*}

Since ${\cal A}(t,\Delta(t))$ is invertible for $\Delta(t)=0$ and is continuous with respect to $\Delta$, it follows that $E_{\Delta(t)}\left[\mathcal{A}'(t,\Delta(t))\mathcal{A}(t,\Delta(t))\right] > 0$. Hence, we obtain
\begin{align*}
E_{\Delta(t)}\left[\mathcal{A}'(t,\Delta(t))P(t+1)\mathcal{A}(t,\Delta(t))\right] < P(t).
\end{align*}
We now need to show $P(t)$ is bounded. The system is assumed mean square exponentially stable as given in Definition \ref{EMSS}. There exists $\beta < 1$ and $K < \infty$, such that
\begin{align*}
E_{\Delta_0^t}\left[\parallel x(t+1)\parallel^2\right] =
E_{\Delta_0^t}\left[ \bigg\| \prod_{k=0}^t \mathcal{A}(k,\Delta(k))x(0) \bigg\|^2 \right] \leq K\beta^t\parallel x(0) \parallel^2.
\end{align*}
Hence, we have
$\parallel P(t) \parallel \leq K \sum_{k=0}^{\infty}\beta^k = \frac{K}{1 - \beta}$. Since matrix $\mathcal{A}(t,\Delta(t))$ is bounded below in some Lebesgue neighborhood of $\Delta(t)=0$ for all $t\geq 0$, we have some constant $\alpha_1 > 0$, such that $\alpha_1 I \leq E_{\Delta(t)}\left[\mathcal{A}'(t,\Delta(t))\mathcal{A}(t,\Delta(t))\right] \; \forall t \geq 0$ which gives
\begin{align*}
P(t) &\geq E_{\Delta(t)}[\mathcal{A}'(t,\Delta(t))\mathcal{A}(t,\Delta(t))] \geq \alpha_1 I.
\end{align*}
Setting $\alpha_2 = \frac{K}{1 - \beta}$, we get $\alpha_1 I \leq P(t) \leq \alpha_2 I$.
\end{proof}

We have the following Lemma providing necessary conditions for the mean square exponential stability of (\ref{stoc_ctrl_eqn}) in terms of the solution of the Riccati equation.

\begin{lemma}
\label{ctrl_gain_thm}
The necessary condition for mean square exponentially stability of system (\ref{stoc_ctrl_eqn}) derived in Theorem \ref{lyap_thm} is equivalent to
\begin{align}
P_0(t) > A'(t)P_0(t+1)A(t)-\frac{\mu^2}{\mu^2+\sigma^2}A'(t)P_0(t+1)B(t)\left(B'(t)P_0(t+1)B(t)\right)^{-1}B'(t)P_0(t+1)A(t),
\end{align}
where $\mu$ is the mean connectivity, $\sigma^2$ is the variance of the zero mean uncertainty. $\{P_0(t)\}_{t\geq 0}$ is the sequence of positive definite symmetric matrices that satisfies the
following Riccati equation \cite{kwakernaak}.
\begin{align*}
P_0(t) = A'(t)P_0(t+1)A(t)-A'(t)P_0(t+1)B(t)\left(I_M + B'(t)P_0(t+1)B(t)\right)^{-1}B'(t)P_0(t+1)A(t) + R(t),
\end{align*}
where $R(t) = R'(t) > 0$ is such that $P_0(t)$ is uniformly bounded above and below.
\end{lemma}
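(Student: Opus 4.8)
The plan is to compute the expectation in Theorem~\ref{lyap_thm} explicitly and then eliminate the controller gain by completing the square. Writing $\mathcal{A}(t,\Delta)=\mathcal{A}_\mu(t)+\Delta\,B(t)K(t)$ with $\mathcal{A}_\mu(t):=A(t)+\mu B(t)K(t)$, and using $E[\Delta]=0$ and $E[\Delta^2]=\sigma^2$, the term linear in $\Delta$ averages to zero and one is left with
\begin{align*}
E_{\Delta(t)}\big[\mathcal{A}'(t,\Delta)P(t+1)\mathcal{A}(t,\Delta)\big]=\mathcal{A}_\mu'(t)P(t+1)\mathcal{A}_\mu(t)+\sigma^2K'(t)B'(t)P(t+1)B(t)K(t),
\end{align*}
so the inequality in (\ref{MSS_lyap_fn_condn}) is equivalent to $\mathcal{A}_\mu'(t)P(t+1)\mathcal{A}_\mu(t)+\sigma^2K'(t)B'(t)P(t+1)B(t)K(t)<P(t)$.

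By Assumption~\ref{unif_cont} ($B'(t)B(t)$ uniformly bounded below) and $P(t+1)>0$, the matrix $B'(t)P(t+1)B(t)$ is positive definite, so the left-hand side above is a strictly convex quadratic in $K(t)$; its minimum over $K(t)$ is attained at $K^\star(t)=-\frac{\mu}{\mu^2+\sigma^2}\big(B'(t)P(t+1)B(t)\big)^{-1}B'(t)P(t+1)A(t)$ with value exactly
\begin{align*}
\mathcal{R}_t[P(t+1)]:=A'(t)P(t+1)A(t)-\frac{\mu^2}{\mu^2+\sigma^2}A'(t)P(t+1)B(t)\big(B'(t)P(t+1)B(t)\big)^{-1}B'(t)P(t+1)A(t).
\end{align*}
Therefore the condition of Theorem~\ref{lyap_thm} holds for some gain $K(t)$ and some uniformly bounded positive definite $\{P(t)\}$ if and only if $P(t)>\mathcal{R}_t[P(t+1)]$ admits a uniformly bounded positive definite solution. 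Moreover, for the specific $P(t)$ built in the proof of Theorem~\ref{lyap_thm} the Lyapunov slack equals $E_{\Delta(t)}[\mathcal{A}'\mathcal{A}]\ge\alpha_1 I$, and since $E_{\Delta(t)}[\mathcal{A}'P(t+1)\mathcal{A}]\ge\mathcal{R}_t[P(t+1)]$ for any gain, the slack $P(t)-\mathcal{R}_t[P(t+1)]$ is itself bounded below by $\alpha_1 I$. This is exactly the asserted inequality written with a generic $P$ in place of $P_0$.

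It then remains to identify this with the statement for $P_0$. The ``if'' direction is immediate: uniform controllability (Assumption~\ref{unif_cont}) guarantees, by \cite{kwakernaak}, that for a suitable uniformly bounded $R(t)>0$ the stated LQR Riccati equation has a unique uniformly bounded positive definite solution $P_0$, so from $P_0(t)>\mathcal{R}_t[P_0(t+1)]$ one takes $P=P_0$ and $K=K^\star$ in (\ref{MSS_lyap_fn_condn}). For the converse I would use that $\mathcal{R}_t$ is order preserving and positively homogeneous of degree one---being the pointwise infimum over $K$ of the linear, order-preserving maps $X\mapsto(A+\mu BK)'X(A+\mu BK)+\sigma^2K'B'XBK$---and run a monotone comparison between a uniformly bounded solution of the strict inequality and the Riccati recursion generating $P_0$; uniform controllability makes the Riccati solution map monotone in both its terminal data and in $R(t)$, and the uniform slack just established is what keeps the strict inequality intact when one passes from the generic solution to $P_0$.

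The main obstacle is this last transfer. A crude two-sided bound $c_1P(t)\le P_0(t)\le c_0P(t)$, available because all matrices in sight are bounded above and below, does not close the gap, since $c_0\neq c_1$ in general and $\mathcal{R}_t[P_0(t+1)]\le c_0\mathcal{R}_t[P(t+1)]<c_0P(t)$ need not lie below $P_0(t)$; and the operator of the LQR Riccati equation is not homogeneous because of the $I_M$ and $R(t)$ terms. The resolution must genuinely exploit that $P_0$ solves an \emph{equation} with $R(t)$ at our disposal---choosing $R(t)$ adaptively, equivalently initializing the monotone Riccati iteration from a bounded solution of the inequality---so that the resulting uniformly bounded $P_0$ still satisfies $P_0(t)>\mathcal{R}_t[P_0(t+1)]$. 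Everything else is routine matrix manipulation.
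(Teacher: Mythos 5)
Your first half is right and matches the paper: expanding the expectation kills the linear-in-$\Delta$ term, the quadratic in $K(t)$ is bounded below (in the Loewner order, via completion of squares) by its value at $K^\star(t)=-\frac{\mu}{\mu^2+\sigma^2}\left(B'(t)P(t+1)B(t)\right)^{-1}B'(t)P(t+1)A(t)$, and one arrives at $P(t)>\mathcal{R}_t[P(t+1)]$ for the uniformly bounded $P$ supplied by Theorem~\ref{lyap_thm}. (Your completion-of-squares phrasing is actually cleaner than the paper's ``take the trace and minimize.'') But the transfer from $P$ to $P_0$, which you explicitly leave open as ``the main obstacle,'' is a genuine gap, and it is exactly the step where the paper does its one nontrivial move. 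You never need a monotone comparison with a Riccati recursion, nor an order-preservation argument: the paper simply \emph{constructs} $P_0$ from $P$ by a constant rescaling.

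Concretely: since $P(t)$ is uniformly bounded below and $B'(t)B(t)$ is bounded below (Assumption~\ref{unif_cont}), there is a single constant $\Sigma>0$ with $\frac{\sigma^2}{\mu^2}B'(t)P(t+1)B(t)\geq\Sigma I_M$ for all $t$. Writing $\frac{\mu^2}{\mu^2+\sigma^2}\left(B'PB\right)^{-1}=\left(B'PB+\frac{\sigma^2}{\mu^2}B'PB\right)^{-1}\leq\left(\Sigma I_M+B'PB\right)^{-1}$ shows that $P$ also satisfies the regularized inequality $P(t)>A'PA-A'PB\left(\Sigma I_M+B'PB\right)^{-1}B'PA$. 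Setting $P_0(t):=P(t)/\Sigma$ converts $\Sigma I_M$ into $I_M$, so $P_0$ satisfies the strict $I_M$-Riccati inequality, and $R(t)$ is then \emph{defined} as the (positive) residual --- this is the precise sense in which ``$R(t)$ is at your disposal''; no existence theorem for the Riccati equation is invoked, and no adaptive iteration is run. Finally, because the map $X\mapsto A'XA-\frac{\mu^2}{\mu^2+\sigma^2}A'XB\left(B'XB\right)^{-1}B'XA$ is positively homogeneous of degree one (a fact you already noted), $P_0=P/\Sigma$ inherits the first displayed inequality of the lemma from $P$. Your closing paragraph correctly diagnoses that a two-sided bound $c_1P\leq P_0\leq c_0P$ cannot work and that homogeneity plus the freedom in $R(t)$ must be exploited, but you stop short of the $\Sigma$-scaling that actually assembles these ingredients; without it the proof is not complete. (As a side remark, your ``if'' direction --- starting from an a priori Riccati solution $P_0$ and deducing the $\frac{\mu^2}{\mu^2+\sigma^2}$ inequality --- would additionally require $\frac{\sigma^2}{\mu^2}B'P_0B\leq I_M$, which is not automatic; the paper, like the downstream use of the lemma, only needs the necessity direction, where $P_0$ is built from $P$ rather than given in advance.)
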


\begin{proof}
From Theorem \ref{lyap_thm} we know a necessary condition for mean square exponential stability of (\ref{stoc_ctrl_eqn}) is given by
\begin{align*}
P(t) > E_{\Delta(t)}\left[\mathcal{A}'(t,\Delta(t))P(t+1)\mathcal{A}(t,\Delta(t))\right],
\end{align*}
where $\mathcal{A}(t,\Delta(t)) := A(t) + \mu B(t)K(t) + \Delta(t)B(t)K(t)$ and there exist $\alpha_1,\alpha_2 > 0$, such that $\alpha_1 I < P(t) < \alpha_2 I$ for all $t >0$. Expanding the above equation, we derive the necessary condition
\begin{align}
\label{nec_suff_condn}
P(t) &> A'(t)P(t+1)A(t) - \mu A'(t)P(t+1)B(t)K(t)\nonumber\\
&\quad \quad- \mu K'(t)B'(t)P(t+1)A(t) + \left(\mu^2 + \sigma^2\right)K'(t)B'(t)P(t+1)B(t)K(t).
\end{align}
Taking the trace and minimizing the RHS w.r.t. $K(t)$, we obtain optimal $K^*(t)$ \cite{kwakernaak} to achieve the mean square exponential stability as
\begin{align*}
K^*(t) = -\frac{\mu}{\mu^2+\sigma^2}\left(B'(t)P(t+1)B(t)\right)^{-1}B'(t)P(t+1)A(t).
\end{align*}
This provides us the necessary condition for mean square exponential stability of the controlled system (\ref{stoc_ctrl_eqn})
\begin{align}
\label{riccati_necessary}
P(t) &> A'(t)P(t+1)A(t)-\frac{\mu^2}{\mu^2+\sigma^2}A'(t)P(t+1)B(t)\left(B'(t)P(t+1)B(t)\right)^{-1}B'(t)P(t+1)A(t).
\end{align}
Now, using the fact $P(t)$ is bounded below, there exists $\Sigma > 0$, such that $\frac{\sigma^2}{\mu^2}B'(t)P(t+1)B(t) \geq \Sigma I_M$ for all $t \geq 0$. Substituting this in (\ref{riccati_necessary}) we obtain,
\begin{align}
\label{riccati_1}
P(t) &> A'(t)P(t+1)A(t)-A'(t)P(t+1)B(t)\left(\Sigma I_M + B'(t)P(t+1)B(t)\right)^{-1}B'(t)P(t+1)A(t).
\end{align}
Defining $P_0(t) = \frac{1}{\Sigma}P(t)$ we find
\begin{align}
\label{riccati_2}
P_0(t) &> A'(t)P_0(t+1)A(t)-A'(t)P_0(t+1)B(t)\left(I_M + B'(t)P_0(t+1)B(t)\right)^{-1}B'(t)P_0(t+1)A(t).
\end{align}
Thus, there exists $R(t) > 0$ for all $t\geq 0$ as given in \cite{kwakernaak_book}, such that
\begin{align}
\label{riccati_2_ineq}
P_0(t) &= A'(t)P_0(t+1)A(t)-A'(t)P_0(t+1)B(t)\left( I_M + B'(t)P_0(t+1)B(t)\right)^{-1}B'(t)P_0(t+1)A(t) + R(t).
\end{align}
We notice that (\ref{riccati_necessary}) is independent of any constant scaling. Hence, $P_0(t)$ satisfies
\begin{align}
P_0(t) &> A'(t)P_0(t+1)A(t)-\frac{\mu^2}{\mu^2+\sigma^2}A'(t)P_0(t+1)B(t)\left(B'(t)P_0(t+1)B(t)\right)^{-1}B'(t)P_0(t+1)A(t).
\end{align}
This gives the required necessary condition.
\end{proof}
The first main result of the paper provides a computable necessary condition for stability of (\ref{stoc_ctrl_eqn}) for $M$-input case with $M < N$.

\begin{theorem}
\label{single_input_thm}
A necessary condition for the mean square exponential stability of system (\ref{stoc_ctrl_eqn}) for $M<N$ inputs is given by
\begin{align}
\label{MSS_exponent}
\sigma^2\frac{\left(\prod_{k=1}^{N_1} \lambda_{exp}^k\right)^{\frac{2}{M}} - 1}{\mu^2} < 1,
\end{align}
where $\sigma^2 < \infty$ is the variance of uncertainty $\Delta$ ( Eq. (\ref{delta_uncertainty})), and $\lambda_{exp}^k = e^{\Lambda_{exp}^k}$, and $\Lambda_{exp}^k$ is the $k^{th}$ positive Lyapunov exponent of uncontrolled system
$ x(t+1) =  A(t) x(t)$ for $k=1,\ldots,N_1$.
\end{theorem}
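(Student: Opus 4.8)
The plan is to convert the Riccati-type necessary condition of Lemma~\ref{ctrl_gain_thm} into a scalar recursion for determinants, carried out on the antistable block, and then read off the growth rate of the determinant of the antistable transition matrix via the Lyapunov-exponent identity~(\ref{det_sum_exp}). Write $c:=\frac{\mu^2}{\mu^2+\sigma^2}\in(0,1)$, so $1-c=\frac{\sigma^2}{\mu^2+\sigma^2}$, and let $\{P_0(t)\}$ be the uniformly bounded positive definite sequence from Lemma~\ref{ctrl_gain_thm}, say $\alpha_1 I\le P_0(t)\le\alpha_2 I$, satisfying
\[
P_0(t)>A'(t)P_0(t+1)A(t)-c\,A'(t)P_0(t+1)B(t)\big(B'(t)P_0(t+1)B(t)\big)^{-1}B'(t)P_0(t+1)A(t).
\]
The right-hand side equals $A'(t)P_0(t+1)^{1/2}\big(I-c\,\Pi(t)\big)P_0(t+1)^{1/2}A(t)$ with $\Pi(t):=P_0(t+1)^{1/2}B(t)\big(B'(t)P_0(t+1)B(t)\big)^{-1}B'(t)P_0(t+1)^{1/2}$ the orthogonal projection onto $\mathrm{range}\big(P_0(t+1)^{1/2}B(t)\big)$; by Assumption~\ref{unif_cont} the matrix $B(t)$ has full column rank $M$, so $\Pi(t)$ has rank $M$ and $I-c\,\Pi(t)>0$ has eigenvalue $1-c$ with multiplicity $M$ and $1$ with multiplicity $N-M$.

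Next I restrict to the antistable block. Set $J:=\begin{bmatrix}I_{N_1}\\0\end{bmatrix}$; by Assumption~\ref{assumption_split}, $A(t)$ is block diagonal, hence $A(t)J=J\,A_u(t)$. Writing $P_{uu}(t):=J'P_0(t)J$ (so $\alpha_1 I_{N_1}\le P_{uu}(t)\le\alpha_2 I_{N_1}$) and $W(t):=J'P_0(t+1)^{1/2}\Pi(t)P_0(t+1)^{1/2}J$, multiplying the displayed inequality on the left by $J'$ and on the right by $J$ gives
\[
P_{uu}(t)>A_u'(t)\big(P_{uu}(t+1)-c\,W(t)\big)A_u(t).
\]
Here $W(t)\ge0$, $\mathrm{rank}\,W(t)\le\mathrm{rank}\,\Pi(t)=M$, and $W(t)\le J'P_0(t+1)J=P_{uu}(t+1)$ because $\Pi(t)\le I$; hence $P_{uu}(t+1)-c\,W(t)\ge(1-c)P_{uu}(t+1)>0$ and the right-hand side above is positive definite ($A_u(t)$ is invertible, being exponentially antistable). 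Taking determinants of this positive definite matrix inequality and using the elementary fact that if $0\le W\le P$ and $\mathrm{rank}\,W\le M$ then $\det(P-cW)=\det P\cdot\prod_i(1-c\theta_i)\ge(1-c)^M\det P$ — the $\theta_i\in[0,1]$ being the at most $M$ nonzero eigenvalues of $P^{-1/2}WP^{-1/2}$ — I obtain the scalar recursion
\[
\det P_{uu}(t)>(1-c)^M\,\big|\det A_u(t)\big|^2\,\det P_{uu}(t+1).
\]

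Iterating from $t=0$ to $T-1$ telescopes to $\det P_{uu}(0)>(1-c)^{MT}\,\big|\det\Phi_u(T,0)\big|^2\,\det P_{uu}(T)$ with $\Phi_u(T,0)=A_u(T-1)\cdots A_u(0)$. Using $\alpha_1^{N_1}\le\det P_{uu}(t)\le\alpha_2^{N_1}$, taking logarithms, dividing by $T$ and letting $T\to\infty$, the bounded terms disappear while $\frac1T\log|\det\Phi_u(T,0)|=\frac1T\sum_{t=0}^{T-1}\log|\det A_u(t)|\to\log\prod_{k=1}^{N_1}\lambda_{exp}^k$ by~(\ref{det_sum_exp}) applied to the $N_1$-dimensional antistable subsystem $x(t+1)=A_u(t)x(t)$, whose Lyapunov exponents are exactly the $N_1$ positive exponents of $x(t+1)=A(t)x(t)$ by the dichotomy in Assumption~\ref{assumption_split}. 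This yields $2\log\prod_{k=1}^{N_1}\lambda_{exp}^k\le M\log\frac{\mu^2+\sigma^2}{\sigma^2}$, i.e. $\big(\prod_{k=1}^{N_1}\lambda_{exp}^k\big)^{2/M}-1\le\frac{\mu^2}{\sigma^2}$, which rearranges to~(\ref{MSS_exponent}). Strictness comes from the uniform gap in the underlying Lyapunov inequality: from the proof of Theorem~\ref{lyap_thm}, $P(t)-E_{\Delta(t)}[\mathcal A'(t,\Delta(t))P(t+1)\mathcal A(t,\Delta(t))]\ge\alpha_1 I$, which propagates through Lemma~\ref{ctrl_gain_thm} to $P_{uu}(t)\ge A_u'(t)(P_{uu}(t+1)-c\,W(t))A_u(t)+\delta I$ for some uniform $\delta>0$; since $A_u(t)$ is uniformly bounded, this inserts a uniform factor strictly larger than $1$ into the recursion at each step, turning the final inequality strict.

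The step I expect to be the main obstacle is the restriction to the antistable block while preserving the rank-$M$ structure: one must rewrite the Riccati right-hand side so that, after compressing by $J$, the projection $\Pi(t)$ still contributes exactly the factor $(1-c)^M$ per step — this is what produces the exponent $2/M$ rather than $2/N_1$ — and one must simultaneously keep the compressed matrix positive definite so that determinant monotonicity and~(\ref{det_sum_exp}) apply. A minor related point is the case $M\ge N_1$: there $\mathrm{rank}\,W(t)\le N_1$, the recursion improves to a factor $(1-c)^{N_1}$, and the resulting bound only strengthens~(\ref{MSS_exponent}), so the stated inequality holds a fortiori. The remaining ingredients — the determinant of a projection-perturbed positive matrix, the telescoping, and the limit — are routine.
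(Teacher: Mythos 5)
Your proposal is correct and follows essentially the same route as the paper: it starts from the Riccati-type necessary condition of Lemma~\ref{ctrl_gain_thm}, compresses it to the antistable block, extracts the per-step factor $(1-c)^M\lvert\det A_u(t)\rvert^2$ via a determinant bound, telescopes, and invokes~(\ref{det_sum_exp}). The only differences are cosmetic (you use an orthogonal projection and a rank-$M$ determinant lemma where the paper uses Sylvester's determinant theorem and a Schur-complement bound) plus a more careful treatment of strictness, which the paper glosses over.
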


\begin{proof}
From Theorem \ref{ctrl_gain_thm}, we have a necessary condition for a system with $N$ states and $M$ inputs given by
\begin{align}
\label{nec_condn_1}
P_0(t) &> {A}'(t)P_0(t+1){A}(t)-\frac{\mu^2}{\mu^2+\sigma^2}{A}'(t)P_0(t+1){B}(t)\left({B}'(t)P_0(t+1){B}(t)\right)^{-1}{B}'(t)P_0(t+1){A}(t).
\end{align}
Let $P_0(t)$ be given by the blockwise representation,
\begin{align}
\label{P0_block}
P_0(t) := \left[
\begin{array}{cc}
P_{0_{11}}(t) & P_{0_{12}}(t)\\
P_{0_{12}}'(t) & P_{0_{22}}(t)
\end{array}
\right],
\end{align}
where $P_{0_{11}}$ is an $N_1 \times N_1$ block, $P_{0_{22}}$ is an $N_2\times N_2$ block, and $P_{0_{12}}$ is a $N_1\times N_2$ block. We know since the matrix $P_0(t)$ is positive definite, any $k\times k$ block for $k\leq N$ must be positive definite. Hence, from (\ref{nec_condn_1}) and (\ref{P0_block}), the necessary condition for mean square exponential stability provides the positive definiteness of the first $N_1\times N_1$ block in (\ref{nec_condn_1}), given by
\begin{align}
P_{0_{11}}(t) &> A_u'(t)P_{0_{11}}(t+1)A_u(t)\nonumber\\
&-\frac{\mu^2}{\mu^2+\sigma^2}\Big[A_u'(t)\Big(P_{0_{11}}(t+1)B_u(t)+P_{0_{12}}(t+1)B_s(t)\Big)\Big]\nonumber\\
&\times\Big[\left(B'(t)P_0(t+1)B(t)\right)^{-1}\Big(P_{0_{11}}(t+1)B_u(t)+P_{0_{12}}(t+1)B_s(t)\Big)'A_u(t)\Big].
\end{align}
Taking determinants on both sides and using Sylvester's determinant theorem, we obtain
\begin{align}
\label{nec_condn_2}
\det \left(P_{0_{11}}(t)\right) &> \Big[\det \left(A_u(t)\right)^2\det \left(P_{0_{11}}(t+1)\right)\Big] \nonumber\\
&\times\Bigg[\det\bigg(I_{M}-\left(\frac{\mu^2}{\mu^2+\sigma^2}\right)\left({B}'(t)P_0(t+1){B}(t)\right)^{-1}\nonumber\\
&\times\Big(P_{0_{11}}(t+1)B_u(t)+P_{0_{12}}(t+1)B_s(t)\Big)'P_{0_{11}}(t+1)^{-1}\Big(P_{0_{11}}(t+1)B_u(t)+P_{0_{12}}(t+1)B_s(t)\Big)\bigg)\Bigg].
\end{align}
Using the partition for ${B}(t)$, we write
\begin{align}
\label{P_0_compare}
&{B}'(t)P_0(t+1){B}(t)\nonumber\\
&= B_u'(t)P_{0_{11}}(t+1)B_u(t)+B_u'(t)P_{0_{12}}(t+1)B_s(t)+B_s'(t)P_{0_{12}}'(t+1)B_u(t)+B_s'(t)P_{0_{22}}(t+1)B_s(t)\nonumber\\
&> B_u'(t)P_{0_{11}}(t+1)B_u(t)+B_u'(t)P_{0_{12}}(t+1)B_s(t)+B_s'(t)P_{0_{12}}'(t+1)B_u(t)\nonumber\\
&\quad +B_s'(t)\left(P_{0_{12}}(t+1)'P_{0_{11}}(t+1)^{-1}P_{0_{12}}(t+1)\right)B_s(t)\nonumber\\
&>\Big(P_{0_{11}}(t+1)B_u(t)+P_{0_{12}}(t+1)B_s(t)\Big)'P_{0_{11}}(t+1)^{-1}\Big(P_{0_{11}}(t+1)B_u(t)+P_{0_{12}}(t+1)B_s(t)\Big),
\end{align}
since $P_{0_{22}}(t+1) > P_{0_{12}}(t+1)'P_{0_{11}}(t+1)^{-1}P_{0_{12}}(t+1)$ as $P_0(t+1)$ is positive definite. Hence, from (\ref{P_0_compare}) we derive
\begin{align}
\label{P_0_ineq}
I_M &> \Bigg[\left({B}'(t)P_0(t+1){B}(t)\right)^{-1}\Big(P_{0_{11}}(t+1)B_u(t)+P_{0_{12}}(t+1)B_s(t)\Big)'P_{0_{11}}(t+1)^{-1}\nonumber\\
&\quad \times\Big(P_{0_{11}}(t+1)B_u(t)+P_{0_{12}}(t+1)B_s(t)\Big)\Bigg].
\end{align}
Hence, using (\ref{P_0_ineq}) in (\ref{nec_condn_2}), we find
\begin{align}
1 > \left(\frac{\sigma^2}{\mu^2+\sigma^2}\right)^M\det \left(A_u(t)\right)^2\det \left(P_{0_{11}}(t+1)\right)\det \left(P_{0_{11}}(t)\right)^{-1}.
\end{align}
Hence, the necessary condition can be written as
\begin{align*}
1 > \left(\frac{\sigma^2}{\mu^2+\sigma^2}\right)^{M(t+1)}\prod_{k=0}^t\left(\det \left(A_u(k)\right)\right)^2\det \left(P_{0_{11}}(t+1)\right)\det \left(P_{0_{11}}(0)\right)^{-1}.
\end{align*}
Taking the logarithm, averaging over $t+1$, and taking the limit as $t \to \infty$, we obtain the necessary condition,
\begin{align*}
1 > \left(\frac{\sigma^2}{\mu^2+\sigma^2}\right)^M\left(\prod_{k=1}^{N_1} \lambda_{exp}^k\right)^2,
\end{align*}
where we are use the fact $P_0(t)$ (and hence $P_{0_{11}}(t)$) is bounded above and below for all $t\geq 0$ and Eq. (\ref{det_sum_exp}) from Definition \ref{Lyapunov_exponents}. This condition is rewritten as
\begin{align}
\sigma^2\frac{\left(\prod_{k=1}^{N_1} \lambda_{exp}^k\right)^{\frac{2}{M}} - 1}{\mu^2}<1.\label{mss}
\end{align}
\end{proof}

\begin{remark} \label{remark_robust}  The necessary conditions derived in Theorem \ref{lyap_thm}, Lemma \ref{ctrl_gain_thm} are equivalent. Lemma \ref{ctrl_gain_thm} implies Theorem \ref{single_input_thm} though the converse may not be true. Hence Theorem \ref{single_input_thm} is lower in the hierarchy in  comparison with Theorem \ref{lyap_thm} and Lemma \ref{ctrl_gain_thm}.
The necessary condition for stability in Eq. (\ref{MSS_exponent}) can be used to provide critical value of variance, $\sigma^{*}$, above which the system is guaranteed to be mean square unstable. In particular the critical value of variance using Eq. (\ref{MSS_exponent}) is given by
\begin{eqnarray}
\sigma^{*}=\left(\frac{\mu^{2}}{(\prod_{k=1}^{N_1} \lambda_{exp}^k)^{\frac{2}{M}} -1}\right)^{\frac{1}{2}}\label{critical_prob}.
\end{eqnarray}
The necessary condition for mean square exponential stability derived in the above theorem  is tighter for the single input case (i.e., $M=1$).
However, for $1<M<N$, Eq. (\ref{MSS_exponent}) provides a necessary condition for stability and can be made tighter, i.e.,  improved necessary condition can be obtained that will provide for a smaller value of critical variance $\sigma^{*}$ than the one provided by Eq. (\ref{critical_prob}). We expect the tighter necessary condition to depend on some combination of Lyapunov exponents and not necessarily on all the Lyapunov exponents as it does in Eq. (\ref{MSS_exponent}). Borrowing terminology from \cite{Elia2005237}, the quantity $\left(\frac{\left(\prod_{k=1}^{N_1} \lambda_{exp}^k\right)^{\frac{2}{M}} - 1}{\mu^2}\right)^{\frac{1}{2}}$ in Eq. (\ref{MSS_exponent}) can be viewed as the scaled mean square norm of the nominal system with mean connectivity $\mu$ as seen by the uncertainty $\Delta$ for block diagram in Fig. \ref{fig:sch}b. Thus if we consider this as the mean square input-output gain of the nominal system and $\sigma^2$ as the mean square gain of the uncertainty, then the necessary condition in Theorem \ref{single_input_thm} may be interpreted as a necessary small gain condition for mean square stability of nonlinear systems.
\end{remark}

The next main result of this paper provides the necessary and sufficient condition for the mean square exponential stability of the feedback system for the $N$ input case. For this $N$ input case, we assume the matrix $B(t)$ is non-singular.

\begin{theorem}
\label{n_input_thm}
A necessary and sufficient condition for the mean square exponential stability of (\ref{stoc_ctrl_eqn}) with $N$ inputs is given by
\begin{align}
\sigma^2\frac{\left(\lambda_{exp}^1\right)^2 - 1}{\mu^2} < 1,
\end{align}
where $\lambda_{exp}^1 = e^{\Lambda_{exp}^1}$ and $\Lambda_{exp}^1$ is the maximum positive Lyapunov exponent of system $x(t+1) = A(t)x(t)$.
\end{theorem}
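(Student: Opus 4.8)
The plan is to prove the two implications separately, exploiting the fact that for $M=N$ with $B(t)$ non-singular the control authority is ``full'': on the one hand the Riccati-type necessary condition of Lemma~\ref{ctrl_gain_thm} collapses to a scalar inequality on the open-loop transition matrix, and on the other hand a dead-beat-type gain attains the matching sufficient bound. Throughout I would use that, by Definition~\ref{Lyapunov_exponents}, $\|\Phi(t,0)\|^{1/t}\to\lambda_{exp}^1$ as $t\to\infty$, so that $\lambda_{exp}^1$ is exactly the exponential growth rate of the spectral norm of the open-loop transition matrix of $x(t+1)=A(t)x(t)$.

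\emph{Sufficiency.} Assuming $\sigma^2\big((\lambda_{exp}^1)^2-1\big)/\mu^2<1$, I would take the state feedback $K(t)=-\frac{\mu}{\mu^2+\sigma^2}B(t)^{-1}A(t)$; this is well defined and uniformly bounded because $B(t)$ is non-singular, $B'(t)B(t)$ is uniformly bounded below and $A(t)$ is uniformly bounded (Assumption~\ref{unif_cont}), and it is precisely the optimal $K^\ast(t)$ of Lemma~\ref{ctrl_gain_thm}, which for non-singular $B(t)$ is independent of the weighting matrix. With this gain the closed-loop map becomes the \emph{scalar} multiple $\mathcal{A}(t,\Delta(t))=\frac{\sigma^2-\mu\Delta(t)}{\mu^2+\sigma^2}A(t)$, so $x(t)$ equals a product of i.i.d.\ scalars times $\Phi(t,0)x(0)$; using $E[\Delta]=0$, $E[\Delta^2]=\sigma^2$ one gets $E\big[(\sigma^2-\mu\Delta)^2/(\mu^2+\sigma^2)^2\big]=\sigma^2/(\mu^2+\sigma^2)$, hence
\begin{align*}
E_{\Delta_0^{t-1}}\!\left[\|x(t)\|^2\right]=\left(\tfrac{\sigma^2}{\mu^2+\sigma^2}\right)^{t}\|\Phi(t,0)x(0)\|^2\le \left(\tfrac{\sigma^2}{\mu^2+\sigma^2}\right)^{t}\|\Phi(t,0)\|^2\,\|x(0)\|^2 .
\end{align*}
Since $\|\Phi(t,0)\|^2\le K_\varepsilon\big((\lambda_{exp}^1)^2+\varepsilon\big)^t$ for every $\varepsilon>0$ and a suitable $K_\varepsilon<\infty$, choosing $\varepsilon$ small enough that $\frac{\sigma^2}{\mu^2+\sigma^2}\big((\lambda_{exp}^1)^2+\varepsilon\big)<1$ --- which is possible exactly when the assumed inequality holds --- gives mean square exponential stability in the sense of Definition~\ref{EMSS}.

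\emph{Necessity.} If (\ref{stoc_ctrl_eqn}) is mean square exponentially stable, Lemma~\ref{ctrl_gain_thm} produces a uniformly bounded, uniformly positive definite sequence $\{P_0(t)\}$ satisfying the Riccati inequality there. For $B(t)$ non-singular, the identity $(B'(t)P_0(t+1)B(t))^{-1}=B(t)^{-1}P_0(t+1)^{-1}(B'(t))^{-1}$ makes the correction term equal $A'(t)P_0(t+1)A(t)$, so the condition reduces to $P_0(t)>\frac{\sigma^2}{\mu^2+\sigma^2}A'(t)P_0(t+1)A(t)$. Iterating this and sandwiching $P_0$ between $c_1 I$ and $c_2 I$ yields $\|\Phi(t,0)\|^2\le c\big(\frac{\mu^2+\sigma^2}{\sigma^2}\big)^t$ for all $t$; taking $\tfrac1{2t}\log(\cdot)$ and $t\to\infty$ gives $(\lambda_{exp}^1)^2\le\frac{\mu^2+\sigma^2}{\sigma^2}$. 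To sharpen this to the strict inequality, I would note that mean square exponential stability with rate $\beta<1$ is preserved under multiplying the closed loop by any $\lambda\in(1,\beta^{-1/2})$, and that the resulting closed loop is the one obtained by feeding the same $K(t)$ to the plant $(\lambda A(t),\lambda B(t))$, whose maximal Lyapunov exponent is $\lambda\lambda_{exp}^1$ and whose input matrix is again non-singular. The non-strict bound applied to this rescaled system gives $\sigma^2\big(\lambda^2(\lambda_{exp}^1)^2-1\big)/\mu^2\le1$, which, since $\lambda>1$, forces $\sigma^2\big((\lambda_{exp}^1)^2-1\big)/\mu^2<1$.

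I expect the only genuinely delicate point to be this last step: iterating a pointwise-strict matrix inequality and passing to the limit returns only a non-strict bound, so some extra device --- here the rescaling argument, which uses that exponential stability leaves a multiplicative margin $\beta<1$ --- is needed to recover the strict inequality claimed. The two algebraic simplifications (the closed loop becoming a scalar multiple of $A(t)$ in the sufficiency direction, and the cancellation of the Riccati correction term in the necessity direction) are routine once $B(t)$ is square and invertible, and the passage to $\lambda_{exp}^1$ is just Definition~\ref{Lyapunov_exponents} applied to $\|\Phi(t,0)\|$.
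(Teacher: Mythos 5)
Your proof follows essentially the same route as the paper's: the same gain $K(t)=-\frac{\mu}{\mu^2+\sigma^2}B(t)^{-1}A(t)$ reduces the closed loop to a scalar multiple of $A(t)$ for sufficiency, and the same collapse of the Riccati inequality to $P_0(t)>\frac{\sigma^2}{\mu^2+\sigma^2}A'(t)P_0(t+1)A(t)$ drives the necessity argument, followed by the same iterate--sandwich--take-logs passage to the Lyapunov exponent. The one substantive difference is your rescaling device for upgrading the limiting bound $(\lambda_{exp}^1)^2\le(\mu^2+\sigma^2)/\sigma^2$ to a strict inequality: the paper simply writes the strict inequality after passing to the limit, so your argument closes a small gap the paper leaves open (modulo the routine check that $\lambda A(t)$ for $\lambda$ slightly larger than $1$ still satisfies the standing dichotomy and controllability assumptions, which holds because those constants have margin).
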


\begin{proof}
From Theorem \ref{ctrl_gain_thm}, we obtain the following necessary condition for mean square exponential stability
\begin{align*}
P_0(t) &> A'(t)P_0(t+1)A(t)- \frac{\mu^2}{\mu^2+\sigma^2}A'(t)P_0(t+1)B(t)\left(B'(t)P_0(t+1)B(t)\right)^{-1}B'(t)P_0(t+1)A(t).
\end{align*}
Since $B(t)$ is a non-singular $N\times N$ matrix and $P_0(t)$ is invertible, we can write the above Lyapunov function inequality as
\begin{align}
\label{N_lyap}
P_0(t) > \frac{\sigma^2}{\mu^2+\sigma^2}A'(t)P_0(t+1)A(t).
\end{align}
Equation (\ref{N_lyap}) implies following inequality to be true
\begin{align*}
P_0(0) > \left(\frac{\sigma^2}{\mu^2+\sigma^2}\right)^{t+1}\left(\prod_{k=0}^tA(k)\right)'P_0(t+1)\left(\prod_{k=0}^tA(k)\right).
\end{align*}
Since there exists $\alpha_1 > 0$ and $\alpha_2 > 0$, such that $\alpha_2 I > P_0(t) > \alpha_1 I$ for all $t > 0$, the necessary condition can be written as
\begin{align}
\label{nec_N_ipt}
\frac{\alpha_2}{\alpha_1} I > \left(\frac{\sigma^2}{\mu^2+\sigma^2}\right)^{t+1}\left(\prod_{k=0}^tA(k)\right)'\left(\prod_{k=0}^tA(k)\right).
\end{align}
Take the logarithm in (\ref{nec_N_ipt}), divide by $t+1$, and take $\lim_{t\to \infty}$, we get the following necessary condition for mean square exponentially stability,
\begin{align}
\label{nec_N_ipt_LE}
\frac{\sigma^2}{\mu^2+\sigma^2}\Lambda^2 < 1,
\end{align}
which is satisfied only if $\frac{\sigma^2}{\mu^2+\sigma^2}\left(\lambda_{exp}^1\right)^2 < 1$. This can be rewritten as $\sigma^2\frac{\left(\lambda_{exp}^1\right)^2 - 1}{\mu^2} < 1$.

We will now prove the sufficiency part.
Consider the controller gain as derived in the necessary condition given by $K = -\frac{\mu}{\mu^2+\sigma^2}B(t)^{-1}A(t)$. Using this controller gain, the dynamics of the controlled system are given by
\begin{align}
\label{dyn_N_ipt}
x(t+1) = \frac{\sigma^2 - \Delta(t)\mu}{\mu^2 + \sigma^2}A(t)x(t).
\end{align}
From (\ref{dyn_N_ipt}), we obtain
\begin{align}
\label{exp_val_Nipt}
E_{\Delta(t)}\left[||x(t+1)||^2\right] = \frac{\sigma^2}{\sigma^2 + \mu^2}x'(t)A'(t)A(t)x(t).
\end{align}
Thus, from (\ref{exp_val_Nipt}) we obtain
\begin{align}
\label{exp_val_Nipt1}
E_{\Delta_0^t}\left[||x(t+1)||^2\right] = \left(\frac{\sigma^2}{\sigma^2 + \mu^2}\right)^{t+1}x'(0)\left(\prod_{k=0}^tA(k)\right)'\left(\prod_{k=0}^tA(k)\right)x(0).
\end{align}
Now, we claim there exist positive constants $K < \infty$ and $\beta < 1$, such that
\begin{align}
\label{full_seq_ineq}
\left(\frac{\sigma^2}{\sigma^2+\mu^2}\right)^{t+1}\Bigg\|\left(\prod_{k=0}^tA(k)\right)'\left(\prod_{k=0}^tA(k)\right)\Bigg\|_2 < K\beta^{t+1}
\end{align}
for all $t \geq 0$. We will defer the proof of this claim for later to maintain continuity in the proof of the sufficiency condition. Now, using the claim from (\ref{full_seq_ineq}) in (\ref{exp_val_Nipt}), we derive
\begin{align}
E_{\Delta_0^t}\left[||x(t+1)||^2\right] &= \left(\frac{\sigma^2}{\sigma^2 + \mu^2}\right)^{t+1}x'(0)\left(\prod_{k=0}^tA(k)\right)'\left(\prod_{k=0}^tA(k)\right)x(0)\nonumber\\
&\leq \left(\frac{\sigma^2}{\sigma^2+\mu^2}\right)^{t+1}\Bigg\|\left(\prod_{k=0}^tA(k)\right)'\left(\prod_{k=0}^tA(k)\right)\Bigg\|_2 ||x(0)||^2 < K\beta^{t+1}||x(0)||^2.
\end{align}
Thus, we have proven the required sufficiency condition.
We will now prove the claim made in (\ref{full_seq_ineq}). To prove this claim suppose
\begin{align}
\sigma^2\frac{\left(\lambda_{exp}^1\right)^2 - 1}{\mu^2} < 1.
\end{align}
Hence, there exists $\beta < 1$, such that $\frac{\sigma^2}{\sigma^2 + \mu^2}\left(\lambda_{exp}^1\right)^2 = \beta^2$. Furthermore, from the definition of the Lyapunov exponents (Definition \ref{Lyapunov_exponents}), we obtain $\lambda_{exp}^1$ for the system $x(t+1) = A(t)x(t)$ is given by
\begin{align}
\lambda_{exp}^1 = ||\Lambda||_2,
\end{align}
where $||\Lambda||_2$ is the matrix $2$-norm of the matrix $\Lambda$ given by
\begin{align}
\Lambda := \lim_{t\to \infty}\left(\prod_{k=0}^tA(k)\right)'\left(\prod_{k=0}^tA(k)\right)^{\frac{1}{2(t+1)}}.
\end{align}
Thus, from the property of the matrix $2$-norm and the Lyapunov exponent definition (Proposition 1.3 \cite{Ruelle_ergodic}, \cite{Ruelle85}), we have
\begin{align}
\lambda_{exp}^1 = \lim_{t\to \infty}\Bigg\|\left(\prod_{k=0}^tA(k)\right)'\left(\prod_{k=0}^tA(k)\right)\Bigg\|_2^{\frac{1}{2(t+1)}}.
\end{align}
Thus, we can conclude
\begin{align}
\label{seq_limit}
\lim_{t\to\infty}\left(\left(\frac{\sigma^2}{\sigma^2 +\mu^2}\right)^{t+1}\Bigg\|\left(\prod_{k=0}^tA(k)\right)'\left(\prod_{k=0}^tA(k)\right)\Bigg\|_2\right)^{\frac{1}{t+1}}
&=\frac{\sigma^2}{\sigma^2+\mu^2}\lim_{t\to\infty}\Bigg\|\left(\prod_{k=0}^tA(k)\right)'\left(\prod_{k=0}^tA(k)\right)\Bigg\|_2^{\frac{1}{t+1}}\nonumber\\
&= \frac{\sigma^2}{\sigma^2 + \mu^2}\left(\lambda_{exp}^1\right)^2\nonumber\\
&= \beta^2 < \beta < 1.
\end{align}
Hence, there exists $N_{\beta} < \infty$, such that $\left(\frac{\sigma^2}{\sigma^2+\mu^2}\right)^{t+1}\Big\|\Big(\prod_{k=0}^tA(k)\Big)'\Big(\prod_{k=0}^tA(k)\Big)\Big\|_2 < \beta^{t+1}$ for all $t \geq N_{\beta}$. The exixstence of $N_{\beta} < \infty$ is proved by contradiction as follows. Suppose $N_{\beta} < \infty$ does not exist. Thus there exits a subsequence $\{t_i\}_{i \geq 0}$, such that $\left(\frac{\sigma^2}{\sigma^2+\mu^2}\right)^{t_i+1}\Big\|\Big(\prod_{k=0}^{t_i}A(k)\Big)'\Big(\prod_{k=0}^{t_i}A(k)\Big)\Big\|_2 > \beta^{t_i+1}$. 

Now, let $s(t) := \left(\left(\frac{\sigma^2}{\sigma^2+\mu^2}\right)^{t+1}\Big\|\Big(\prod_{k=0}^{t}A(k)\Big)'\Big(\prod_{k=0}^{t}A(k)\Big)\Big\|_2\right)^{\frac{1}{t+1}}$. We have $s(t_i) > \beta$ for all $i \geq 0$. Hence, we have from \cite{Rudin_principles} and (\ref{seq_limit})
\begin{align}
\beta \leq \limsup_{i\to \infty}s(t_i) \leq \limsup_{t\to \infty}s(t) = \beta^2 < \beta,
\end{align}
a contradiction. Thus, we conclude there exists an $N_{\beta} < \infty$, such that
\begin{align}
\label{seq_ineq}
\left(\frac{\sigma^2}{\sigma^2+\mu^2}\right)^{t+1}\Bigg\|\left(\prod_{k=0}^tA(k)\right)'\left(\prod_{k=0}^tA(k)\right)\Bigg\|_2 < \beta^{t+1},
\end{align}
for all $t \geq N_{\beta}$. Now, we define
\begin{align}
\label{const_def}
K := \max\Bigg\{1,sup_{0 \leq t \leq N_{\beta}}\frac{\left(\frac{\sigma^2}{\sigma^2+\mu^2}\right)^{t+1}\Big\|\left(\prod_{k=0}^tA(k)\right)'\left(\prod_{k=0}^tA(k)\right)\Big\|_2}{\beta^{t+1}}\Bigg\}
\end{align}
As the supremum is taken over a finite sequence, it will exist and be finite. Hence, from (\ref{seq_ineq}) and (\ref{const_def}), there exist positive constants $K < \infty$ and $\beta < 1$, such that
\begin{align}
\label{full_seq_ineq1}
\left(\frac{\sigma^2}{\sigma^2+\mu^2}\right)^{t+1}\Bigg\|\left(\prod_{k=0}^tA(k)\right)'\left(\prod_{k=0}^tA(k)\right)\Bigg\|_2 < K\beta^{t+1}.
\end{align}
for all $t \geq 0$. This proves the required sufficient condition.
\end{proof}

\begin{remark} \label{remark_NSingle} We examine the two different stability conditions derived in Theorems \ref{single_input_thm} and \ref{n_input_thm} for single input and $N$ input case, respectively. We notice that the necessary condition for the single input case is a function of all positive Lyapunov exponents of the system; whereas, the condition for $N$ input case is a function of only the largest positive Lyapunov exponent. Intuitively, the difference in conditions can be explained as follows. The analysis of an $N$ state system with $N$ inputs is similar to that of $N$ parallel scalar systems with $N$ parallel input channels. Thus, one derives conditions for stabilization for each individual system. The stabilization condition for each system then depends upon the Lyapunov exponent of individual system and the most restrictive of these $N$ conditions provides the stability condition for the entire system. On the other hand, for an $N$-state single input system, the lone input is responsible for stabilizing all the states. The sum of positive Lyapunov exponents (or the product of exponential of the Lyapunov exponent) is equal to the entropy of a system and is a measure of the rate of expansion of the volume in the state space. For stability in a single input case, we require this expansion of open-loop dynamics be compensated by the controller. Hence, the condition for a single input case turns out to be a function of the sum of all positive Lyapunov exponents of the open-loop system.
\end{remark}

\section{Simulations}
\label{sim}

In this section, we present simulation results for the controller design for LTV systems in the presence of the stochastic uncertain channel for a single input system. The uncertain channel considered in the simulations, is an erasure channel modeled as a Bernoulli random variable. Although the main results of this paper provide only necessary conditions for the mean square exponential stability, the simulation results show the derived necessary condition is close to be sufficient.
\subsection{Example 1}
We consider the continuous time LTV system as described in \cite{Khalil_book} by $\dot x(t)=A(t) x(t) + \gamma(t)B u(t)$
with $B = [1\quad 1]'$. The eigenvalues of $A(t)$ are located in the left-half plane at $-0.25\pm j 0.25 \sqrt{7}$ and; hence, independent of $t$. However, the origin is exponentially unstable. This can be verified from the state transition matrix for $A(t)$ written as follows \cite{Khalil_book}:
\begin{align*}
\Phi(t,0) =
\left(
\begin{array}{cc}
e^{0.5t}\cos(t) & e^{-t}\sin(t) \\
-e^{0.5t}\sin(t)  & e^{-t}\cos(t)
\end{array}
\right).
\end{align*}
The state transition matrix can be used to construct a discrete time system as follows:
\begin{align}
x(\Delta (t+1))=\Phi(\Delta(t+1),0)\Phi^{-1}(\Delta t,0)\Phi(\Delta t,0)x(0)
=: \mathcal{A}(\Delta t)x(\Delta t),
\end{align}
where $\mathcal{A}(\Delta t)=\Phi(\Delta (t+1),0)\Phi^{-1}(\Delta t,0)$. For $\Delta =0.1$, the Lyapunov exponents of the system are computed equal to $\lambda_1=0.05$ and $\lambda_2=-0.1$.

\begin{figure}
\centering
\includegraphics[width=0.46\textwidth]{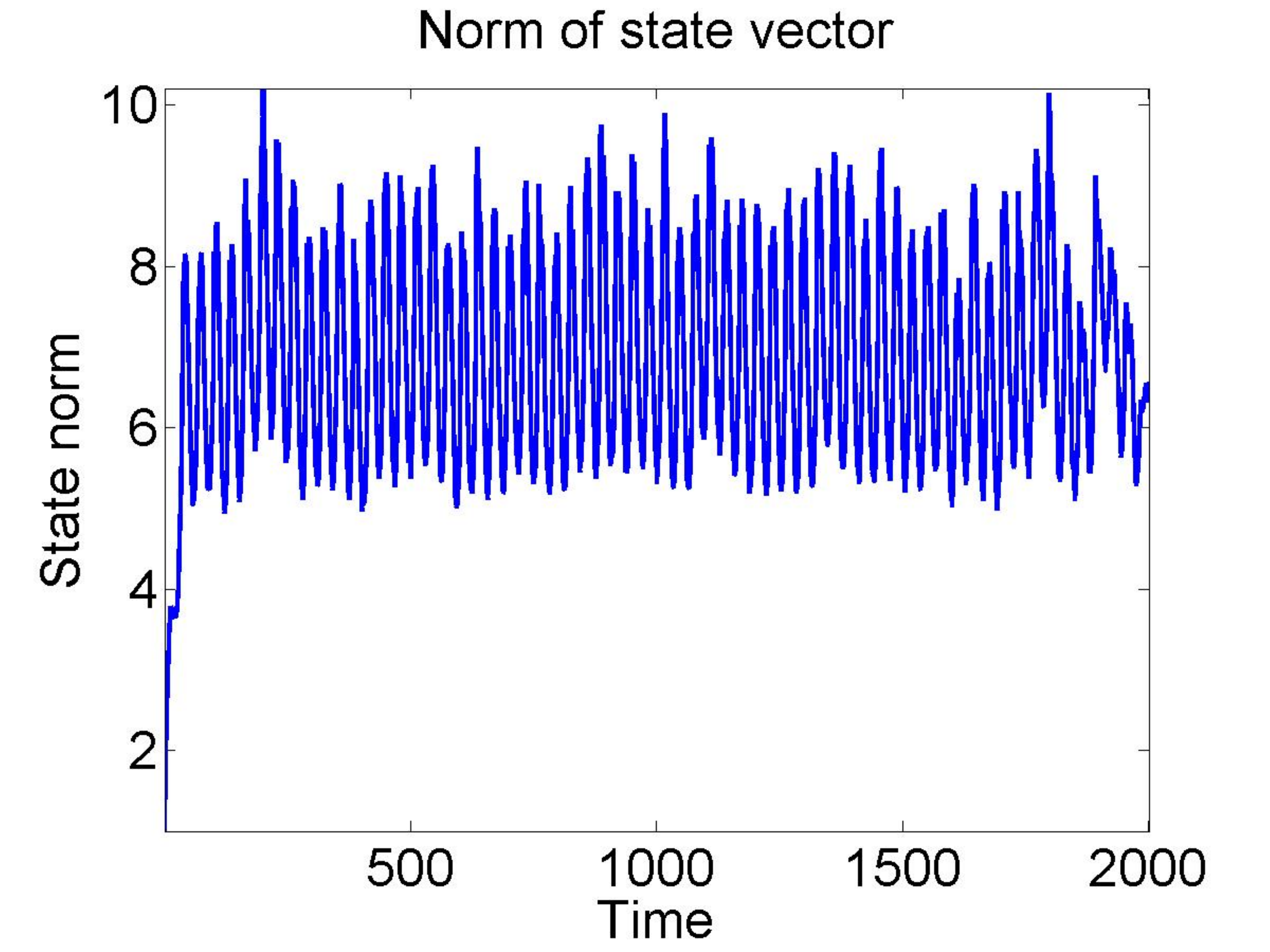}
\caption{State norm for non-erasure probability $p^* < p = 0.11$.}
\label{fig:fig1}
\end{figure}

\begin{figure}
\centering
\includegraphics[width=0.46\textwidth]{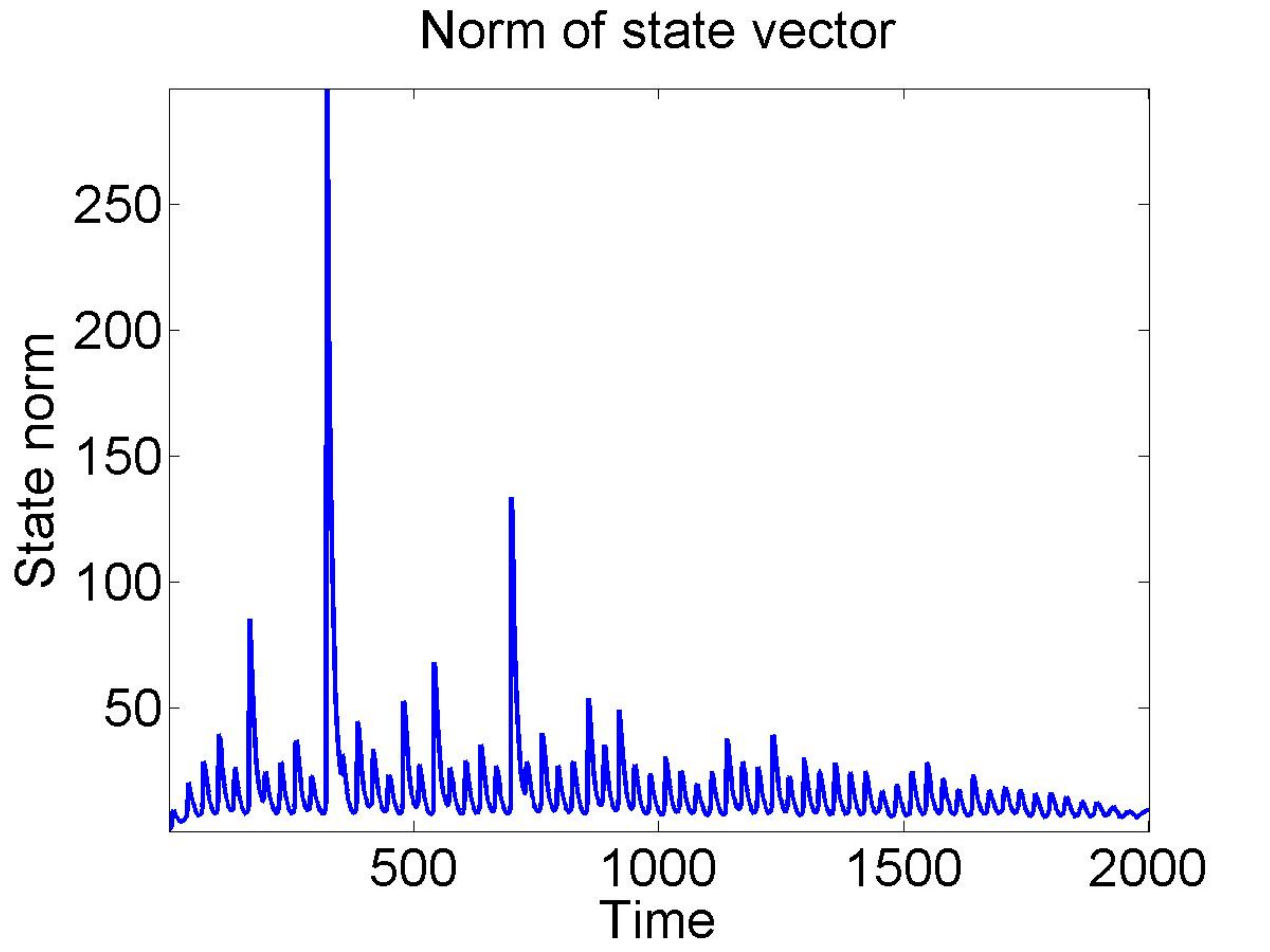}
\caption{State norm for non-erasure probability $p^* > p = 0.09$.}
\label{fig:fig2}
\end{figure}
The critical probability, $p^*$, is the function of the positive Lyapunov exponent and computed equal to $p^*=1-\frac{1}{e^{2\lambda_1}}=0.0952$. In Figs. (\ref{fig:fig1}) and (\ref{fig:fig2}), we show the plots for the state norm for non-erasure probability above and below the critical value of $p^{*}$, respectively. The plots are obtained by averaging the state norm over $1000$ different realizations of the Bernoulli random variable. A zero mean white Gaussian noise with unit variance is added to the system to visualize the mean square unstable dynamics. We see for $p=0.09 < p^*$, the state norm fluctuates to substantially high values, while for $p=0.11 > p^*$, the state norm stabilizes to a small band an order of magnitude smaller than the values at $p=0.09$. The small asymptotic variance for the case of $p = 0.11 > p^*$ is due to the addition of the Gaussian noise vector, and will decrease as the noise variance is decreased. Thus, we may conclude for the controlled system to be robust to the actuation link failure uncertainty in the exponential mean square sense, the probability of non-erasure must be at least given by $p^* = 0.095$. Furthermore, the condition given by the positive Lyapunov exponent seems sufficient, as a small increase in non-erasure probability above $p^*$ shows mean square stable behavior.

\subsection{Example 2}
In the next example, we choose a linear time periodic system with all Lyapunov exponents positive. The system is given by the following sets of periodic $A$ and $B$ matrices
\begin{align*}
A_1 = \left(
\begin{array}{ccc}
-0.4   & 0.8  &  1.2\\
    1  &  0.8  & -0.4\\
    0.6  & -0.8 &   0.4
\end{array}
\right),\quad B_1 = \left(
\begin{array}{c}
1\\
1\\
1
\end{array}
\right)
\end{align*}
\begin{align*}
A_2 = \left(
\begin{array}{ccc}
1.6   & -1.4  &  1.2\\
0.8  &  -1.6  & 2.8\\
 1.6  & -2.2 &   1.2
\end{array}
\right),\quad B_2 = \left(
\begin{array}{c}
2\\
1\\
1
\end{array}
\right)
\end{align*}
\begin{align*}
A_3 = \left(
\begin{array}{ccc}
-0.8   & 1.6  &  1.2\\
1.6  &  -1.2  & -1.2\\
1.6  & -2.4 &   1.2
\end{array}
\right),\quad B_3 = \left(
\begin{array}{c}
1\\
1\\
2
\end{array}
\right).
\end{align*}
This system has all Lyapunov exponents positive given by $\lambda_1 = 0.4578$, $\lambda_2 = 0.1191$, and $\lambda_3 = 0.0544$. Then, from Theorem \ref{ctrl_gain_thm} we have the critical probability as
\[p^* = 1 - \frac{1}{e^{2(\sum_{i}\lambda_i)}} = 0.7170.\]

\begin{figure}[ht!]
\centering
\includegraphics[width=0.46\textwidth]{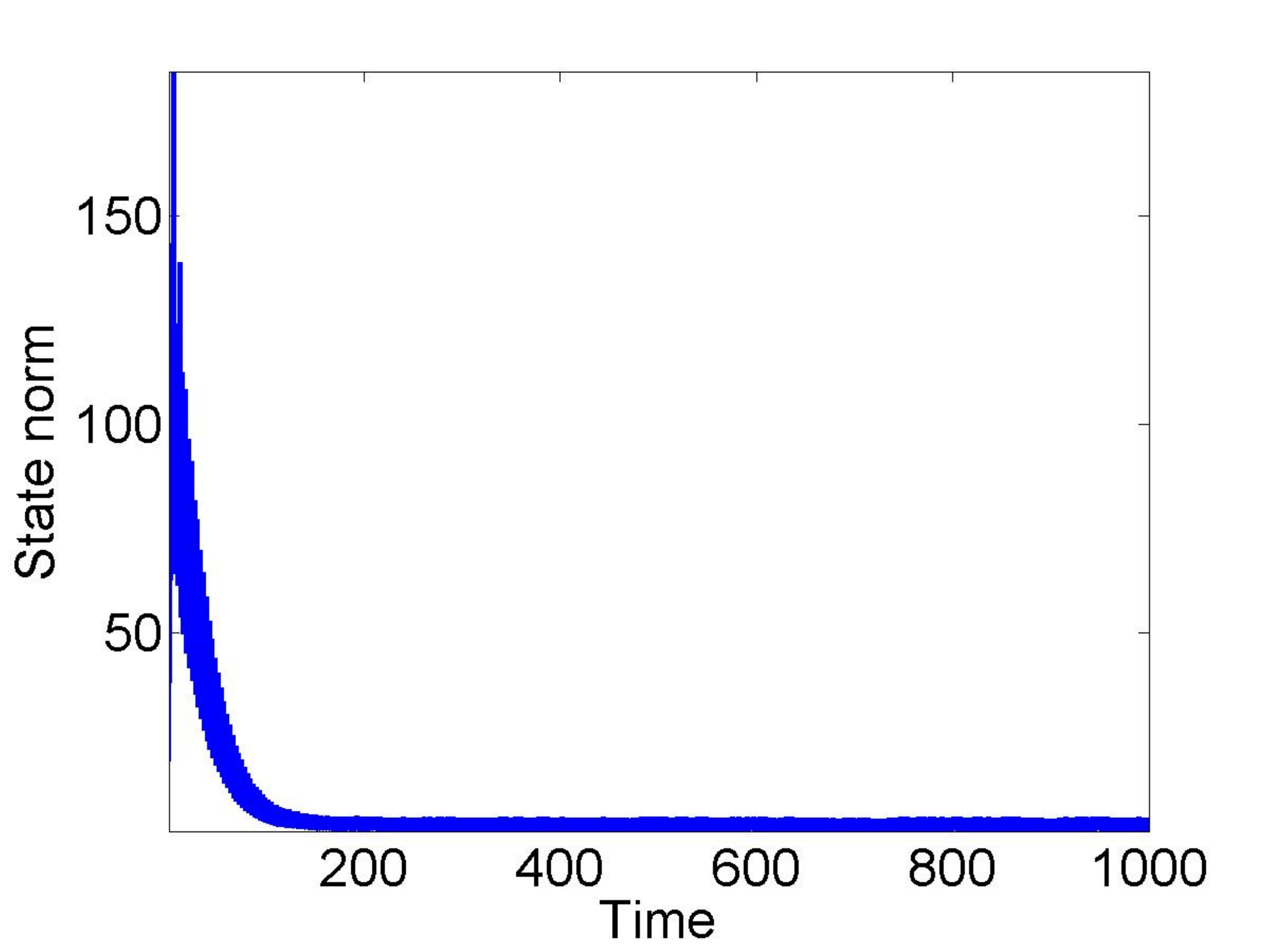}
\caption{State norm for non-erasure probability above $p^* < p=0.8170$.}
\label{fig:fig5}
\end{figure}

\begin{figure}[ht!]
\centering
\includegraphics[width=0.46\textwidth]{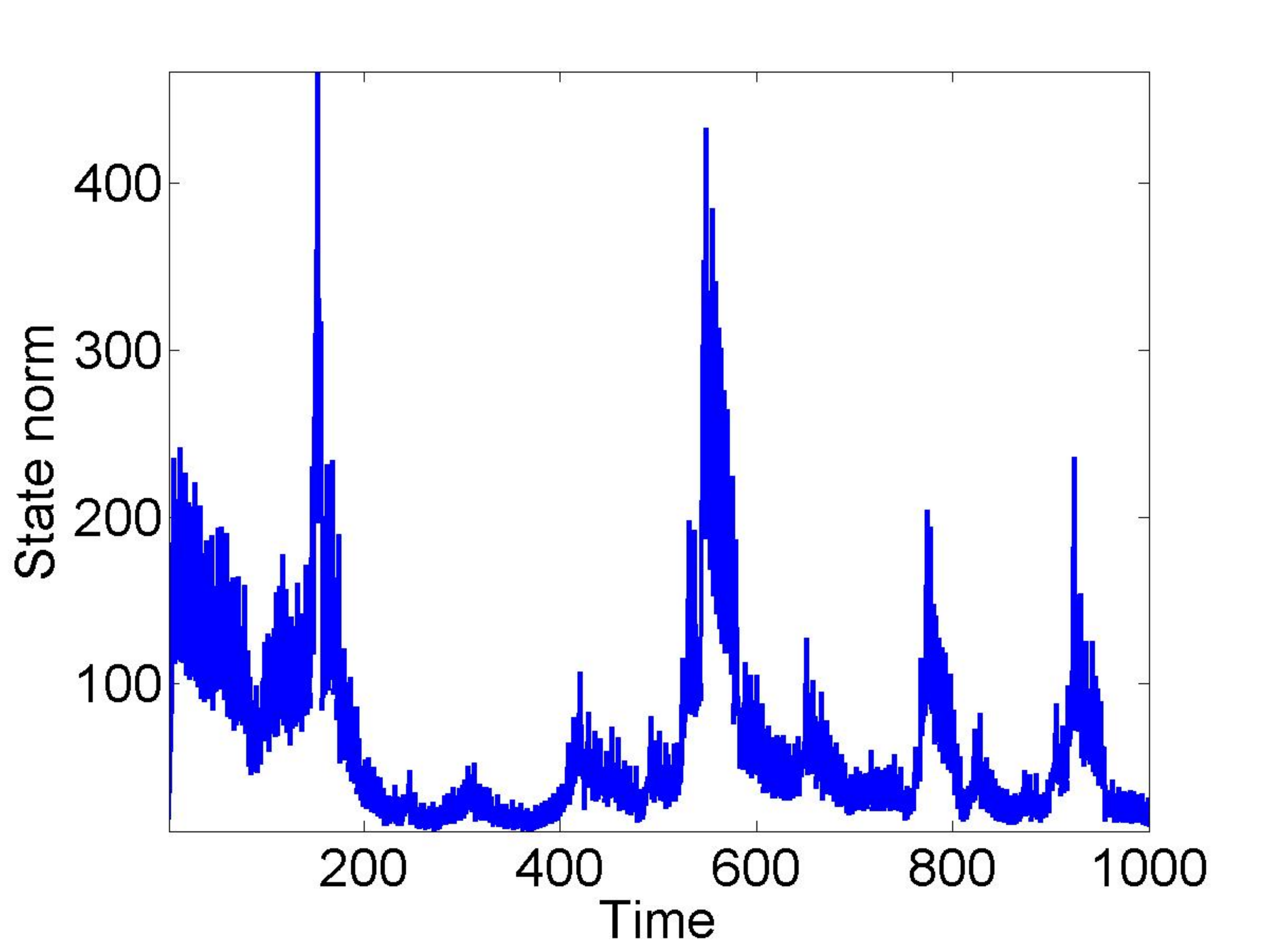}
\caption{State norm for non-erasure probability below $p^* > p=0.6170$.}
\label{fig:fig7}
\end{figure}

We add to the system some white zero mean Gaussian noise with variance $\sigma_G^2 = 0.01$. Now, we plot the norm of the state for the case with uncertainty in control for two values of the non-erasure probability, $p = 0.8170 > p^*$ and $p = 0.6170 < p^*$. Furthermore, in case of uncertainty in control the norm has been averaged over $1000$ realizations of the actuation uncertainty sequence. We clearly see, in Fig. (\ref{fig:fig5}), the norm of the state above the critical probability, $p = 0.8170 > p^*$, stabilizes close to zero, due to the addition of the Gaussian noise vector. In the case of the probability of non-erasure $p = 0.6170 < p^*$ less than the critical probability, in Fig. (\ref{fig:fig7}), the state norm fluctuates significantly as compared to the case for $p = 0.8170 > p^*$. This indicates the system is fragile to the sequence of uncertainties below the critical probability.

\section{Conclusions}
\label{inf}
In this paper, we have studied the problem of control over uncertain channels between the plant and the controller for an LTV system. The results provided necessary and sufficient conditions for the feedback control system to be mean square exponentially stable. We provide computable necessary condition for the $M<N$ input case, where $N$ is the dimension of the state space. For the $N$-input case, we give a computable necessary condition, that is also shown to be sufficient. The necessary conditions are expressed in terms of the mean and variance of the stochastic channel uncertainty and the instability of the open-loop dynamics, as captured by the positive Lyapunov exponents of the open-loop system. The results in this paper generalize the existing results known in the case of LTI systems and Lyapunov exponents emerge as the natural generalization of eigenvalues from LTI systems to LTV systems. Simulation results verify the main conclusion for the single input case for a special case of an erasure channel. While the result provides a necessary condition, our simulation results indicate this condition may also be sufficient. The proof technique presented in this paper can be extended to prove limitation results for the estimation of LTV systems over erasure channels \cite{vaidya_amit_journal}.

\section{Acknowledgment}
Financial support from National Science Foundation grant ECCS 1002053 for this work is greatly acknowledged.

\bibliographystyle{IEEETran}       
\bibliography{ref,ref1,ref2}           
\end{document}